\theoremstyle{plain}
\newtheorem{theorem}                 {Theorem}      [section]
\newtheorem{proposition}  [theorem]  {Proposition}
\theoremstyle{definition}
\newtheorem{remark}       [theorem]  {Remark}
\DeclareMathOperator{\R}{R}
\DeclareMathOperator{\cst}{constant}
\DeclareMathOperator{\trace}{trace}
\def \r{\mbox{${\mathbb R}$}}
\def \co{\mbox{${\mathbb C}$}}
\def \h{\mbox{${\mathbb H}$}}
\def \s{\mbox{${\mathbb S}$}}
\def \S{\mbox{${\mathrm {SL}}(2,\r)_\tau$}}
\def \n{\mbox{${{\nabla}}^\tau$}}
\begin{document}

\title{On the biharmonic curves in the special linear group
${\mathrm {SL}}(2,\r)$}

%
%
\author{I. I. Onnis}
\address{Departamento de Matem\'{a}tica, C.P. 668\\ ICMC,
USP, 13560-970, S\~{a}o Carlos, SP\\ Brasil}
\email{onnis@icmc.usp.br}

\author{A. Passos Passamani}
\address{Departamento de Matem\'{a}tica, C.P. 668\\ ICMC,
USP, 13560-970, S\~{a}o Carlos, SP\\ Brasil}
\email{apoenapp@icmc.usp.br}

\date{March 2014}
\subjclass{53C30, 58E20}
\keywords{Special linear group, biharmonic curves, helix curves,
homogeneous spaces.}
\thanks{The second author was supported by Capes--Brazil}

\begin{abstract}
We characterize the biharmonic curves in the special linear group
$\mathrm{SL}(2,\r)$. In particular, we show that all proper biharmonic curves in
$\mathrm{SL}(2,\r)$ are helices and we give their explicit parametrizations as
curves in the pseudo-Euclidean space $\r^4_2$.
\end{abstract}

\maketitle

\section{Introduction}

Let $\phi:(M^m,g) \to (N^n,h)$ be a smooth map between two Riemannian manifolds.
The
tension field of $\phi$ is, by definition, $\tau(\phi)=\trace \nabla^N d(\phi)$.
According J.~Eells and J.H.~Sampson, see  \cite{EL},
$\phi$ is biharmonic if it is
a critical point of the bienergy functional

 \begin{equation}
 E_2(\phi)=\frac{1}{2}\int_{M}|\tau(\phi)|^2 v_g.
  \label{bienergy}
 \end{equation}

 The first variation formula for $E_2$ was compute by G.Y.~Jiang in \cite{Y1} and
 \cite{Y2} as
\begin{equation}
\tau_2(\phi):= -\Delta^{\phi} \tau (\phi)-\trace \R^N (d \phi,\tau(\phi)) d \phi=0,
\label{tau2}
\end{equation}
where $\Delta^{\phi}$ denotes the {\it rough Laplacian} acting on $C(\phi^{-1}TN)$, defined by
\begin{equation}
 \Delta^{\phi}= -\trace \,(\nabla^{\phi})^2=-\sum_{i=1}^m \{\nabla^{\phi}_{E_i}
 \nabla^{\phi}_{E_i}-\nabla^{\phi}_{\nabla^M_{E_i}{E_i}}\}.
\end{equation}
with respect to a local orthonormal frame field $\{E_i\}_{i=1}^m$ on $M$.

The field $\tau_2(\phi)$ is named {\it bitension field} of $\phi$. \\

As a geodesic curve ($\tau(\phi)=0$)
is a biharmonic one, we are interested in biharmonic curves that are not
geodesics i.e. {\it proper biharmonic curves}.
\\

The study of the proper biharmonic curves on a curved surface starts with
\cite{CMP} where there are described these curves
in a surface, proving that
biharmonic curves on a surface of non-positive Gaussian curvature are geodesics.

For $3$-dimensional Riemannian manifolds with constant sectional
curvature, the case of null and negative curvature are considered in \cite{D} and
\cite{CMO2} and it is showed
that the only biharmonic curves are the geodesic ones. Moreover, in \cite{CMO},
it is considered the case of positive curvature showing that
biharmonic curves have constant geodesic curvature and geodesic torsion.\\

 Besides the spaces forms,
 the most relevant $3$-dimensional homogeneous Riemannian spaces are those with
 $4$-dimensional isometry group: the Berger spheres,
 the Heisenberg group, the special linear group $\mathrm{SL}(2,\r)$, and the
 Riemannian product $\s^2 \times \r$ and $\h^2 \times \r$, where $\s^2$ and $\h^2$
 are the $2$-dimensional sphere and the hyperbolic plane, respectively.
A crucial feature of these spaces is that they admit a Rimannian submersion onto a
surface of constant Gaussian curvature, called the Hopf fibration.\\

 In \cite{B}
A.~Balmu{\c s} determined the parametric equations of all proper biharmonic curves
on the Berger sphere $\s^3_{\epsilon}$ as curves in $\r^4$ and gave a geometric
interpretation for those curves in the unit Euclidean sphere $\s^3$.
 In \cite{CPO} the authors proved that any proper biharmonic curve in the Heisenberg
  group is an helix and gave their explicit parametrizations.

Also,
in \cite{CMOP} are considered the proper biharmonic curves in the
Bianchi-Cartan-Vranceanu
spaces $\widetilde{\mathrm{SL}}(2,\r)$, $SU(2)$,
$\s^2 \times \r$ and $\h^2 \times \r$, proving that
these curves are helices and giving their parametric equations.\\


In this paper we study the proper biharmonic curves in the special linear group
$\mathrm{SL}(2,\r)$ endowed with a suitable $1$-parameter family $g_{\tau}$ of
metrics that we shall describe in Section~$2$.
Using the same technique given in \cite{B} for the case of the
Berger sphere, we conclude that the biharmonic curves of $\mathrm{SL}(2,\r)$ makes
a constant angle $\vartheta$ with the vector field tangent to the Hopf fibration
and we prove the Theorem~\ref{teoEDO}, which states that the differential
equation
$$\gamma^{IV}+(b^2-2a)\,\gamma''+a^2\,\gamma=0$$
must be satisfied by any proper biharmonic curve in $\mathrm{SL}(2,\r)$, as a curve
in the pseudo-Euclidean $\r^4_2$, where
$a$ and $b$ are real constants depending on $\vartheta$ and $\tau$.
We separate the study in three
cases depending on the sign of the constant $(b^2-4a)$ obtaining, in each case,
the expressions of these curves as curves in $\r^4_2$.

\section{Preliminaries}\label{preli}

Let $\r_2^4$ denote the $4$-dimensional pseudo-Euclidean space endowed with semi-definite inner product of signature $(2,2)$ given by
$$
\langle v,w\rangle= v_1\,w_1+v_2\,w_2-v_3\,w_3-v_4\,w_4\,,\quad v,w\in\r^4.
$$
We identify the special linear group with
$$
\mathrm{SL}(2,\r)=\{(z,w)\in\co^2 \colon |z|^2-|w|^2=1\}=\{v\in\r_2^4 \colon
\langle v,v\rangle=1\}\subset\r_2^4
$$
and we shall use the Lorentz model of the hyperbolic plane with constant Gauss
curvature $-\tau$, $\tau>0$, that is
$$
\h^2(-\tau)=\{(x,y,z)\in\r^3_1\colon x^2+y^2-z^2=-1/\tau\},
$$
where $\r^3_1$ is the Minkowski $3$-space. Then the Hopf map
$\psi:\mathrm{SL}(2,\r)\to\h^2(-\tau)$
given by
$$
\psi(z,w)=\frac{1}{\sqrt{\tau}}\,(2z\bar{w},|z|^2+|w|^2)
$$
is a submersion, with circular fibers, and if we put
$$
X_1(z,w)=(iz,iw),\quad X_2(z,w)=(i\bar{w},i\bar{z}),\quad X_3(z,w)=(\bar{w},\bar{z}),
$$
we have that $X_1$ is a vertical vector field while $X_2$, $X_3$ are horizontal. The vector $X_1$ is called the {\em Hopf vector field}.

We shall endow $\mathrm{SL}(2,\r)$ with the $1$-parameter family of metrics $g_\tau$, $\tau>0$, given by
$$
g_\tau(X_i,X_j)=\delta_{ij},\quad g_\tau(X_1,X_1)=\tau^2,\quad g_\tau(X_1,X_j)=0,\quad i,j\in\{2,3\},
$$
which renders the Hopf map $\psi:(\mathrm{SL}(2,\r),g_{\tau})\to\h^2(-\tau)$ a Riemannian submersion.
With respect to the inner product in $\r_2^4$ the metric $g_{\tau}$ is given by
\begin{equation}\label{eq-def-gtau}
g_{\tau}(X,Y)=-\langle X,Y\rangle+(1+\tau^2)\langle X, X_1\rangle \langle Y, X_1\rangle\,.
\end{equation}
From now on, we denote $(\mathrm{SL}(2,\r),g_\tau)$ with $\S$. Obviously
\begin{equation}\label{eq-basis}
    E_1=-\tau^{-1}\,X_1,\quad
     E_2=X_2,\quad
     E_3=X_3,
\end{equation}
is an orthonormal  basis on $\S$. The Levi-Civita connection $\n$ of $\S$ is
given by:
\begin{equation}
\begin{aligned}
&\n_{E_{1}}E_{1}=0,\quad \n_{E_{2}}E_{2}=0,\quad \n_{E_{3}}E_{3}=0,\\
&\n_{E_{1}}E_{2}=-\tau^{-1}(2+\tau^2)E_{3}, \quad \n_{E_{1}}E_{3}=\tau^{-1}(2+\tau^2)E_{2},\\
 &\n_{E_{2}}E_{1}=-\tau E_{3},\quad \n_{E_{3}}E_{1}=\tau E_{2},\quad
 \n_{E_{3}}E_{2}=-\tau E_{1}=-\n_{E_{2}}E_{3}.
\end{aligned}
\label{nabla}
\end{equation}
Using the conventions
$$\R (X,Y)Z= \n_X \n_Y Z-\n_Y \n_X Z-\n_{[X,Y]} Z$$
and
$$\R (X,Y,W,Z)= g_{\tau}(\R(X,Y)Z,W),$$
the nonzero components of the Riemannian curvature are
\begin{equation}
\R_{1212}=\tau^2, \qquad \R_{1313}=\tau^2,  \qquad \R_{2323}=-(4+3\tau^2),
\label{curvature}
\end{equation}
where $\R_{ijkl}=\R(E_i,E_j,E_k,E_l)$. \\

Finally, we recall that the isometry group of $\S$ is the $4$-dimensional indefinite unitary group $\mathrm{U}_1(2)$
that can be identified with:
$$
\mathrm{U}_1(2)=\{A\in \mathrm{O}_2(4)\colon AJ_1=\pm J_1A\}\,,
$$
where $J_1$ is the complex structure of $\r^4$ defined by
$$
J_1 = \left(\begin{matrix}J & 0 \\ 0 & J\end{matrix}\right)\,,\quad J = \left(\begin{matrix}0 & -1 \\ 1 & 0\end{matrix}\right)\,,
$$
while
$$\mathrm{O}_2(4)=\{A\in \mathrm{GL}(4,\r)\colon A^t=\epsilon\,A^{-1}\,\epsilon\},\qquad \epsilon=\begin{pmatrix}I&0\\0&-I\end{pmatrix},\quad I=\begin{pmatrix}1&0\\0&1\end{pmatrix},$$
is the indefinite orthogonal group.

\section{Biharmonic curves in $\S$}
Let $\gamma: I \to \S$ be a differentiable curve parametrized by arc
length and let $\{T, N, B\}$ be the orthonormal frame field tangent to
$\S$ along $\gamma(s)$ defined as follows:  we denote by $T$ the unit
vector field
$\gamma'(s)$ tangent to $\gamma(s)$, by $N$ the unit vector
field in the direction of $\n_{T} T$ normal to $\gamma$, and we choose
$B$ so that
$\{T, N, B\}$ is a positive oriented orthonormal basis.
Then we have the following
Frenet equations
\begin{equation}
\left\{
\begin{aligned}
 &\n_{T} T= k_1 N, \\
 &\n_{T} N=-k_1 T +k_2 B,\\
 &\n_{T} B=-k_2 N,
 \end{aligned}
 \label{frenet}
 \right.
\end{equation}
where $k_1=|\n_{T} T|$ is the geodesic curvature of $\gamma$ and $k_2$
its geodesic torsion.

 \begin{theorem}
 Let $\gamma:I \to \S$ be a curve parametrized by arc length. Then $\gamma$ is
 proper biharmonic if and only if
\begin{equation}
  \left\{\begin{aligned}
& k_1 = \cst \neq 0 ,\\
& k_1^2+k_2^2 =\tau^2-4\,(1+\tau^2)\,B_1^2,\\
& k_2' =-4(1+\tau^2)\,N_1\, B_1.
\label{7.1}
\end{aligned}
\right.
 \end{equation}

 \end{theorem}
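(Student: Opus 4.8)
The plan is to specialize the bitension field \eqref{tau2} to the one-dimensional domain $I$ and then read off three scalar equations by decomposing in the Frenet frame. Since $I$ carries the flat metric and $\nabla^I_{\partial_s}\partial_s=0$, the rough Laplacian collapses to $\Delta^\gamma V=-\n_T\n_T V$; together with $\tau(\gamma)=\n_T T$ this turns the biharmonicity condition $\tau_2(\gamma)=0$ into
$$\n_T^3 T-\R(T,\n_T T)\,T=0.$$
First I would expand $\n_T^3 T$ by differentiating $\n_T T=k_1 N$ three times with the Frenet equations \eqref{frenet}, obtaining
$$\n_T^3 T=-3k_1 k_1'\,T+(k_1''-k_1^3-k_1 k_2^2)\,N+(2k_1' k_2+k_1 k_2')\,B,$$
while the curvature term is simply $\R(T,\n_T T)T=k_1\,\R(T,N)T$.

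Next I would evaluate $\R(T,N)T$ using the components \eqref{curvature}. Writing $T=\sum_i T_i E_i$, $N=\sum_i N_i E_i$ and $B=\sum_i B_i E_i$ in the orthonormal basis \eqref{eq-basis}, the list \eqref{curvature} shows that $\R$ acts diagonally on the coordinate planes, with factor $\tau^2$ on $E_1\wedge E_2$ and $E_1\wedge E_3$ and factor $-(4+3\tau^2)$ on $E_2\wedge E_3$. Taking inner products with $N$ and $B$, the decisive simplification is that the minors $T_iN_j-T_jN_i$ are the components of the cross product $T\times N=B$, namely $T_1N_2-T_2N_1=B_3$, $T_3N_1-T_1N_3=B_2$, $T_2N_3-T_3N_2=B_1$, and likewise $T_2B_3-T_3B_2=-N_1$ from $T\times B=-N$. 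Combining these with $|B|^2=1$ reduces the curvature scalars to
$$g_\tau(\R(T,N)T,N)=-\tau^2+4(1+\tau^2)B_1^2,\qquad g_\tau(\R(T,N)T,B)=-4(1+\tau^2)N_1 B_1,$$
and $g_\tau(\R(T,N)T,T)=0$ holds by the antisymmetry of $\R$.

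Finally I would project the biharmonic equation onto $T$, $N$, $B$. The $T$-component gives $k_1 k_1'=0$, i.e. $k_1$ is constant; proper biharmonicity rules out the geodesic case $k_1=0$, so $k_1=\cst\neq0$, and this removes the $k_1'$, $k_1''$ terms from the other components. The $N$-component then reads $-k_1^3-k_1 k_2^2-k_1\,g_\tau(\R(T,N)T,N)=0$, which after dividing by $-k_1$ and inserting the value above becomes $k_1^2+k_2^2=\tau^2-4(1+\tau^2)B_1^2$; the $B$-component reads $k_1 k_2'-k_1\,g_\tau(\R(T,N)T,B)=0$, giving $k_2'=-4(1+\tau^2)N_1 B_1$. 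Conversely, these three identities reproduce the vanishing of all three components, so the system \eqref{7.1} is equivalent to biharmonicity. I expect the main obstacle to be the curvature step: handling the triple sum $\R(T,N)T=\sum_{i,j,k}T_iN_jT_k\,\R(E_i,E_j)E_k$ and recognizing the cross-product identities that collapse everything onto the single components $N_1$ and $B_1$.
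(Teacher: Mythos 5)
Your proposal is correct and follows essentially the same route as the paper: reduce the bitension field to $(\n_T)^3T-\R(T,\n_T T)T=0$, expand via the Frenet equations, and evaluate the curvature terms from \eqref{curvature} in the frame $\{E_i\}$, with your cross-product identities and $|B|^2=1$, $g_\tau(N,B)=0$ yielding exactly the paper's values $\R(T,N,T,N)=\tau^2-4(1+\tau^2)B_1^2$ and $\R(T,N,T,B)=4(1+\tau^2)N_1B_1$ (your signs are consistent with the convention $\R(X,Y,W,Z)=g_\tau(\R(X,Y)Z,W)$). You merely carry out explicitly the Frenet expansion and the curvature contraction that the paper leaves implicit, so no gap.
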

\begin{proof}

Consider a curve $\gamma: I \to \S$ parametrized by arc length. In this case
the equation \eqref{tau2} becomes
\begin{equation}
  (\n_T)^3 T-\R(T,\n_T T)T=0.
  \label{eqbi}
 \end{equation}
Using the Frenet equations into \eqref{eqbi}, we obtain the
 conditions
\begin{equation}
  \left\{\begin{aligned}
& k_1 = \cst \neq 0 ,\\
& k_1^2+k_2^2 =\R(T,N,T,N),\\
& k_2' =-\R(T,N,T,B).
\label{sistemaBi1}
\end{aligned}
\right.
 \end{equation}
Writing
 \begin{equation}
 \label{TNB}
T=\sum_{i=1}^3 T_i\, E_i,\qquad N=\sum_{i=1}^3 N_i\, E_i,\qquad
B=\sum_{i=1}^3 B_i\, E_i,
 \end{equation}
and using \eqref{curvature}, we have that
\begin{equation*}
\begin{aligned}
 &\R(T,N,T,N)=\tau^2-4(1+\tau^2)B_1^2,\\
 &\R(T,N,T,B)=4(1+\tau^2)N_1\,B_1.
\end{aligned}
\end{equation*}
\end{proof}

\begin{proposition}\label{curvaturasCST}
 If $\gamma:I \to \S$ is a proper biharmonic curve parametrized by arc length, then
 its geodesic curvature and torsion are constants.
\end{proposition}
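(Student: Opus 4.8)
Since the first equation of the biharmonic system \eqref{7.1} already gives $k_1=\cst\neq 0$, the whole content of the proposition is that the geodesic torsion $k_2$ is constant as well. The strategy is to produce a first-order differential relation for the component $B_1=g_\tau(B,E_1)$ and then play it off against the last two equations of \eqref{7.1}.

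First I would compute the derivative of $E_1$ along $\gamma$. Expanding $T=\sum_i T_i E_i$ and reading off \eqref{nabla} gives $\n_T E_1=\tau\,(T_3 E_2-T_2 E_3)$. Differentiating $B_1=g_\tau(B,E_1)$ and using both the metric compatibility of $\n$ and the Frenet relation $\n_T B=-k_2 N$, one obtains
$$B_1'=g_\tau(\n_T B,E_1)+g_\tau(B,\n_T E_1)=-k_2 N_1+\tau\,(T_3 B_2-T_2 B_3).$$
The key observation is that $T_3 B_2-T_2 B_3=N_1$: the matrix with rows $(T_i)$, $(N_i)$, $(B_i)$ is the passage matrix between two positively oriented $g_\tau$-orthonormal frames, hence lies in $\mathrm{SO}(3)$, so its cofactors equal its entries; this particular cofactor identity is just $N=B\times T$ written in $E_i$-components. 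Therefore $B_1'=(\tau-k_2)\,N_1$.

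With this relation established, I would differentiate the second equation of \eqref{7.1}, namely $k_1^2+k_2^2=\tau^2-4(1+\tau^2)B_1^2$ with $k_1$ constant, to get $k_2 k_2'=-4(1+\tau^2)B_1 B_1'$. Substituting $B_1'=(\tau-k_2)N_1$ and then recognizing $-4(1+\tau^2)N_1 B_1=k_2'$ from the third equation of \eqref{7.1} collapses the right-hand side to $(\tau-k_2)k_2'$, so that
$$k_2'\,(2k_2-\tau)=0.$$
To finish I would argue by continuity: were $k_2$ nonconstant, there would be an open subinterval of $I$ on which $k_2'\neq 0$, which would force $k_2\equiv\tau/2$ and hence $k_2'\equiv 0$ there, a contradiction; thus $k_2'\equiv 0$ on $I$ and $k_2$ is constant. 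I expect the only real obstacle to be sign bookkeeping — in $\n_T E_1$ and in the cofactor identity $T_3 B_2-T_2 B_3=N_1$ — since a wrong sign there would destroy the clean factorization $k_2'(2k_2-\tau)=0$ on which the argument hinges.
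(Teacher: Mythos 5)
Your proof is correct and takes essentially the same route as the paper: both establish the key relation $B_1'=(\tau-k_2)\,N_1$ (the paper by expanding $\n_T B$ componentwise against \eqref{nabla}, you by metric compatibility together with the cofactor identity $N_1=T_3B_2-T_2B_3$ — the same computation in substance), then differentiate the second equation of \eqref{7.1} and substitute the third to arrive at $k_2'\,(2k_2-\tau)=0$. Your closing continuity argument is just a slightly more careful rendering of the paper's pointwise dichotomy ($B_1N_1=0$, hence $k_2'=0$, or $k_2=\tau/2$), so no genuinely different idea is involved.
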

\begin{proof}
 From the Frenet equations it results that
 $$g_{\tau}(\n_{T} B,E_1)=-g_{\tau}(k_2 N,E_1)=-k_2\, N_1.$$
 On the other hand, using \eqref{nabla}, we get
\begin{equation*}
\begin{aligned}
 g_{\tau}(\n_{T} B, E_1)&=  g_{\tau}(B'_1\, E_1+T_2\, B_3
 \n_{E_2} E_3+T_3 B_2
 \n_{E_3} E_2, E_1)\\
 &=B'_1+\tau(T_2\, B_3-T_3\, B_2)\\
 &=B'_1-\tau N_1.
 \end{aligned}
\end{equation*}
Combining these two equations, we have
\begin{equation}
 B'_1=(\tau-k_2)\, N_1.
 \label{8.1}
\end{equation}
Now, using \eqref{7.1} we obtain
\begin{equation}
 k_2\, k'_2=-4(1+\tau^2)\,B_1\, B'_1.
 \label{8.2}
\end{equation}
From \eqref{8.1} and \eqref{8.2} it results that $(\tau-2k_2) B_1\, N_1=0$.
Therefore, we have two
possibilities: $B_1\, N_1=0$ that, together with \eqref{7.1}, implies $k'_2=0$;
or $k_2=\frac{\tau}{2}$. So $k_2$ is constant.
\end{proof}

\begin{proposition}
 If $\gamma: I \to \S$ is a proper biharmonic curve parametrized  by arc length,
 then it makes a constant angle with the Hopf vector field $E_1$ and its tangent
 vector field can be writen as
 \begin{equation}
\gamma'(s)=T=\cos\vartheta\, E_1+ \sin\vartheta \sin\beta(s)\, E_2+
\sin\vartheta \cos\beta(s)\, E_3,
\label{10.2}
\end{equation}
\end{proposition}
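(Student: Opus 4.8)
The plan is to prove that the angle $\vartheta(s)$ between $T$ and the Hopf vector field $E_1$ is constant, since the stated form of $T$ then follows at once. Because $\{E_1,E_2,E_3\}$ is orthonormal and $|T|=|E_1|=1$, this angle satisfies $\cos\vartheta=g_\tau(T,E_1)=T_1$, so the whole statement reduces to showing $T_1'=0$.

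First I would differentiate $T_1=g_\tau(T,E_1)$ along $\gamma$, using the metric compatibility of $\n$, the Frenet equation $\n_T T=k_1N$, and the expansion $T=\sum_iT_iE_i$ together with the connection coefficients \eqref{nabla}. The key computation is
\begin{equation*}
\n_T E_1=\sum_i T_i\,\n_{E_i}E_1=\tau\,(T_3\,E_2-T_2\,E_3),
\end{equation*}
which is orthogonal to $T$; hence $g_\tau(T,\n_T E_1)=\tau(T_2T_3-T_3T_2)=0$ (a reflection of the fact that $E_1$ is a Killing field for $g_\tau$), and therefore
\begin{equation*}
T_1'=g_\tau(\n_TT,E_1)+g_\tau(T,\n_TE_1)=k_1\,N_1.
\end{equation*}
So everything comes down to showing that $N_1\equiv 0$.

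To this end I would invoke that $k_1$ is constant (the first line of \eqref{7.1}) and $k_2$ is constant (Proposition~\ref{curvaturasCST}). The second equation of \eqref{7.1} then reads $4(1+\tau^2)B_1^2=\tau^2-k_1^2-k_2^2=\cst$, so $B_1^2$, and hence $B_1$ by continuity, is constant, giving $B_1'=0$. Feeding this into \eqref{8.1}, namely $B_1'=(\tau-k_2)N_1$, yields $(\tau-k_2)N_1=0$. The only way this could fail to force $N_1\equiv0$ is $k_2=\tau$; but that case is incompatible with proper biharmonicity, since substituting $k_2=\tau$ into the second equation of \eqref{7.1} gives $k_1^2=-4(1+\tau^2)B_1^2\le0$, contradicting $k_1\neq0$. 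Hence $N_1\equiv0$, so $T_1'=k_1N_1=0$ and $T_1=\cos\vartheta$ is constant. The conclusion \eqref{10.2} then follows from $T_1^2+T_2^2+T_3^2=1$: one has $T_2^2+T_3^2=\sin^2\vartheta$, so writing $T_2=\sin\vartheta\sin\beta(s)$ and $T_3=\sin\vartheta\cos\beta(s)$ gives the stated expression.

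I expect the main obstacle to be the middle step — establishing $T_1'=k_1N_1$, which hinges on the orthogonality $\n_TE_1\perp T$ extracted from \eqref{nabla} — together with excluding the degenerate value $k_2=\tau$; once $N_1\equiv0$ is secured, the remainder is routine.
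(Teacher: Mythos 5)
Your proof is correct, and it reaches the decisive fact $N_1\equiv 0$ by a genuinely different mechanism than the paper, even though the outer skeleton (reduce everything to $T_1'=k_1N_1$, then kill $N_1$) is the same. The paper first proves $B_1\neq 0$ by a dichotomy: $B_1=N_1=0$ would make $\gamma$ an integral curve of $E_1$, hence a geodesic, while $B_1=0$, $N_1\neq 0$ forces $k_2=\tau$ via \eqref{8.1} and then $k_1=0$ from the second equation of \eqref{7.1}; with $B_1\neq0$ in hand, it reads $N_1=0$ directly off the \emph{third} equation of \eqref{7.1}, $k_2'=-4(1+\tau^2)N_1B_1$, since $k_2'=0$ by Proposition~\ref{curvaturasCST}. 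You instead use the \emph{second} equation of \eqref{7.1}: with $k_1$ and $k_2$ constant, $4(1+\tau^2)B_1^2=\tau^2-k_1^2-k_2^2$ is constant, so $B_1'=0$, and then \eqref{8.1} gives $(\tau-k_2)N_1=0$; your exclusion of $k_2=\tau$ (which would give $k_1^2=-4(1+\tau^2)B_1^2\le 0$) is essentially the same computation the paper uses to dispose of its second subcase, but deployed once, globally, so you never need $B_1\neq0$ at all. Both routes lean equally on Proposition~\ref{curvaturasCST}, so neither is more elementary, but yours is slightly tidier in replacing the paper's pointwise case analysis on $B_1$ with a single dichotomy on the constant $k_2$. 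Your verification of the final step is also sound and matches the paper's: $\n_{T}E_1=\tau\,(T_3E_2-T_2E_3)$ follows from \eqref{nabla}, is $g_{\tau}$-orthogonal to $T$ (as it must be, $E_1$ being a constant multiple of the Killing field $X_1$), whence $T_1'=g_{\tau}(\n_{T}T,E_1)=k_1N_1=0$, and the passage from $T_1=\cos\vartheta$ constant to the parametrization \eqref{10.2} via $T_2^2+T_3^2=\sin^2\vartheta$ is exactly the paper's concluding step (both you and the paper treat the existence of the smooth angle function $\beta$ and the normalization $\vartheta\in(0,\pi/2]$ at the same informal level).
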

where $\vartheta\in(0,\pi/2]$ and $\beta: I \to \r$ is a smooth function.
\begin{proof}
 First we note that $B_1\neq 0$.
Indeed if $B_1=0$ and $N_1=0$, then the curve is the integral curve of
 the vector
 field $E_1$ and it is a geodesic. Moreover, if $B_1=0$ and $N_1 \neq 0$, from
 \eqref{8.1} we get $k_2=\tau$ that, together with the second equation of
 \eqref{7.1}, gives $k_1=0$.

 Since $B_1\neq 0$, the third equation of \eqref{7.1} and the
 Proposition~\ref{curvaturasCST} implies $N_1=0$.
Now, using
 the equations \eqref{nabla} and \eqref{frenet} we
 obtain
\begin{equation*}
 k_1N_1= g_{\tau}(\n_{T}T,E_1)=T'_1.
\end{equation*}
We conclude that $T_1=\cst$ and we obtain the expression \eqref{10.2}.
\end{proof}

Using the previous result we have the following
\begin{theorem} \label{teoEDO}
 Let $\gamma: I \to \S\subset \r^4_2$ be a curve parametrized by arc length.
 Then $\gamma$ is
 proper biharmonic if and only if, as a curve in $\r^4_2$, satisfies
 \begin{equation}
  \label{11.1}
  \gamma^{IV}+(b^2-2a)\,\gamma''+a^2\,\gamma=0,
 \end{equation}
where $a$ and $b$ are the constants given by:
\begin{equation} \label{ab}
 \left\{
 \begin{aligned}
 &a=\frac{1}{2}(-\tau^{-2}+1-(1+\tau^{-2})\cos 2\vartheta)-
  \tau^{-1}\cos \vartheta\, \beta',\\
  &b=\beta'= -\tau^{-1}(2+\tau^2)\cos\vartheta\pm \sqrt{(4+5\tau^2)\cos^2\vartheta
  -4(1+\tau^2)},
 \end{aligned}
\right.
\end{equation}
with $$ \frac{4(1+\tau^2)}{(4+5\tau^2)}\leq \cos^2\vartheta<1. $$
\end{theorem}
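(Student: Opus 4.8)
The plan is to translate the intrinsic biharmonicity conditions, already established in the previous results, into an extrinsic fourth-order ODE for $\gamma$ viewed as a curve in the ambient pseudo-Euclidean space $\r^4_2$. By the preceding proposition I may assume the tangent field has the form \eqref{10.2} with $T_1=\cos\vartheta$ constant and $N_1=0$, and by Proposition~\ref{curvaturasCST} both $k_1,k_2$ are constant. First I would rewrite the system \eqref{7.1}: since $N_1=0$, the unit normal $N$ lies in the horizontal distribution spanned by $E_2,E_3$, so differentiating \eqref{10.2} and comparing with $\n_T T=k_1 N$ via the connection coefficients \eqref{nabla} should express $k_1$, $B_1$, and $\beta'$ directly in terms of $\vartheta$ and $\tau$. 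This is where the explicit formula $b=\beta'$ and the constraint on $\cos^2\vartheta$ come from: the second equation of \eqref{7.1}, namely $k_1^2+k_2^2=\tau^2-4(1+\tau^2)B_1^2$, together with the relation $k_2=$ const forces a quadratic condition on $\cos\vartheta$, whose discriminant being nonnegative yields $\frac{4(1+\tau^2)}{4+5\tau^2}\le\cos^2\vartheta<1$ and whose roots give the two signs in the expression for $b$.

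Next I would pass from the Levi-Civita connection $\n$ of $\S$ to the ambient connection. The key tool is the relation \eqref{eq-def-gtau} between $g_\tau$ and the flat inner product on $\r^4_2$, which lets me write the ambient second derivative $\gamma''$ (ambient meaning the ordinary derivative in $\r^4_2$) as $\n_T T$ plus correction terms involving the second fundamental form of $\S\subset\r^4_2$ and the term measuring how $g_\tau$ deviates from $-\langle\cdot,\cdot\rangle$ along the Hopf direction $X_1$. Concretely, I would compute $\gamma''$, $\gamma'''$, and $\gamma^{IV}$ by repeated differentiation, each time re-expressing tangential derivatives through the Frenet frame and the now-constant scalars $k_1,k_2,T_1,B_1$, and the normal component through the position vector $\gamma$ itself (since $\langle\gamma,\gamma\rangle=1$ on $\S$ and $X_1$ is built from $\gamma$). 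The outcome should be that $\gamma''$ and $\gamma^{IV}$ are linear combinations, with constant coefficients, of $\gamma$, $\gamma''$ and lower ambient derivatives, collapsing into the single relation \eqref{11.1} once the scalars are substituted.

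The main obstacle I anticipate is the bookkeeping in this extrinsic differentiation: because the metric $g_\tau$ is anisotropic (the Hopf direction is rescaled by $\tau^2$) and the signature is $(2,2)$, the Gauss formula relating $\n$ to the ambient derivative carries an extra term from the $(1+\tau^2)\langle\cdot,X_1\rangle\langle\cdot,X_1\rangle$ piece of \eqref{eq-def-gtau}, so I must carefully track how $X_1$ and its ambient derivatives decompose against $\gamma$ and the frame $\{T,N,B\}$. I would handle this by first establishing clean formulas for the ambient derivatives of the vector fields $X_1,X_2,X_3$ (equivalently $E_1,E_2,E_3$) along $\gamma$, then feeding these into the four successive differentiations. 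Once $\gamma^{IV}$ is written as $\alpha\,\gamma''+\delta\,\gamma$ for suitable constants $\alpha,\delta$, matching against \eqref{11.1} forces $\alpha=-(b^2-2a)$ and $\delta=-a^2$; verifying that the scalars coming out of the computation are exactly the $a,b$ in \eqref{ab} is the final consistency check, and the perfect-square structure $a^2$ together with $b^2-2a$ (reminiscent of $(\lambda^2-a)^2+(b^2-2a)\lambda^2$ factoring the characteristic polynomial) is the algebraic signal that the substitution has been done correctly.
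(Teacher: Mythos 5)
Your proposal is correct in substance and reaches the theorem, but it routes the key step through heavier machinery than the paper uses. You propose to convert the intrinsic data into the ambient ODE via a Gauss-type formula for $\S\subset\r^4_2$, with correction terms coming from the $(1+\tau^2)\langle\cdot,X_1\rangle\langle\cdot,X_1\rangle$ piece of \eqref{eq-def-gtau}; this is workable but delicate, since $g_\tau$ is \emph{not} the induced metric (the restriction of $-\langle\cdot,\cdot\rangle$ to $\S$ has signature $(1,2)$ while $g_\tau$ is Riemannian), so the naive Gauss formula fails and you would have to carry the full difference tensor. The paper avoids every connection computation at this stage: since $X_1,X_2,X_3$ are restrictions of \emph{linear} maps of $\r^4_2$ (e.g.\ $X_1=J_1\gamma$ along $\gamma$), the constant-angle expression \eqref{10.2} is literally the explicit linear first-order system \eqref{coorT} for the coordinates $x_i(s)$; one ordinary differentiation plus back-substitution of \eqref{coorT} yields \eqref{coorT'}, i.e.\ $x_i''=a\,x_i\mp b\,x_j'$, and two more differentiations give \eqref{11.1} by pure calculus. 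Your fallback plan of ``establishing clean formulas for the ambient derivatives of $X_1,X_2,X_3$ along $\gamma$'' is in fact exactly this mechanism, so your approach collapses to the paper's once you notice the linearity, which is what the Gauss-formula scaffolding obscures. Two smaller corrections: the condition produced by the second equation of \eqref{7.1}, after substituting the Frenet data \eqref{k1N}--\eqref{k2B}, is a quadratic in $\beta'$ (not in $\cos\vartheta$) with constant coefficients; its real solvability gives $(4+5\tau^2)\cos^2\vartheta\geq 4(1+\tau^2)$ and its two roots give the $\pm$ in \eqref{ab}, while the upper bound $\cos^2\vartheta<1$ comes from $k_1\neq 0$ (the curve is not a geodesic, so $\sin\vartheta\neq 0$), not from the discriminant. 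Finally, your parenthetical factorization is off: the characteristic polynomial of \eqref{11.1} is $\lambda^4+(b^2-2a)\lambda^2+a^2=(\lambda^2-a)^2+b^2\lambda^2=(\lambda^2-ib\lambda-a)(\lambda^2+ib\lambda-a)$, which is precisely the algebraic trace of eliminating $x_j'$ from \eqref{coorT'}.
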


\begin{proof}
Writing
 $$
    \gamma(s)=(x_1(s),x_2(s),x_3(s),x_4(s)),
$$
from \eqref{10.2} we have that the coordinates functions of $\gamma$ in $\r_2^4$ satisfies
\begin{equation}\label{coorT}\left\{\begin{aligned}
x_1'&=\tau^{-1}\cos\vartheta\,x_2+\sin\vartheta\cos\beta\,x_3+
\sin\vartheta\sin\beta\,x_4,\\
x_2'&=-\tau^{-1}\cos\vartheta\,x_1+\sin\vartheta\sin\beta\,x_3-
\sin\vartheta\cos\beta\,x_4,\\
x_3'&=\sin\vartheta\cos\beta\,x_1+\sin\vartheta\sin\beta\,x_2+
\tau^{-1}\cos\vartheta\,x_4,\\
x_4'&=\sin\vartheta\sin\beta\,x_1-\sin\vartheta\cos\beta\,x_2-
\tau^{-1}\cos\vartheta\,x_3.\\
\end{aligned}
\right.
\end{equation}
Deriving \eqref{coorT}, it results that
\begin{equation}\label{coorT'}\left\{\begin{aligned}
x_1''&= a\, x_1- b\, x_2',\\
x_2''&= a\, x_2+ b\, x_1',\\
x_3''&= a\, x_3- b\, x_4',\\
x_4''&= a\, x_4+ b\, x_3',\\
\end{aligned}
\right.
\end{equation}
where
$$
 \left\{
 \begin{aligned}
 &a=\frac{1}{2}(-\tau^{-2}+1-(1+\tau^{-2})\cos 2\vartheta)-
  \tau^{-1}\cos \vartheta\, \beta',\\
  &b=\beta'.\\
 \end{aligned}
\right.
$$

 Now, we shall prove that $b$ is constant and we determine its expression.
Computing $\n_TT$, using \eqref{10.2} and \eqref{nabla}, the geodesic curvature
and the normal vector field are given by
\begin{equation}\label{k1N}
 k_1=\pm\sin\vartheta (\beta'+  \, 2\tau^{-1}\,(1+\tau^2)\cos\vartheta),\qquad
 N=\pm(\cos\beta\, E_2-\sin\beta\, E_3).
 \end{equation}
Then
\begin{equation}\label{k2B}
\begin{aligned}
  &B=T\wedge N=\pm(-\sin\vartheta\, E_1 +\cos\vartheta\sin\beta \,
  E_2+\cos\vartheta\cos\beta \, E_3),\\
  &k_2=g_{\tau}(\n_{T} N, B)=(\tau-\cos\vartheta
 (\beta'+ 2\tau^{-1}\,(1+\tau^2)\,\cos\vartheta )).
 \end{aligned}
  \end{equation}
Substituting the expressions of $k_1$, $k_2$ and $B_1$ in the second equation of
 \eqref{7.1},
 it results that
 $$\beta'= -\tau^{-1}(2+\tau^2)\cos\vartheta\pm \sqrt{(4+5\tau^2)\cos^2\vartheta
  -4(1+\tau^2)}.$$
Now deriving twice \eqref{coorT'}, and use \eqref{coorT}, we obtain the
 equation \eqref{11.1}.
Also, as the curve $\gamma$ is not harmonic, from \eqref{k1N}, $\cos\vartheta \neq 1$.

 \end{proof}
\begin{remark}
Using \eqref{coorT} and \eqref{coorT'},  we find that:
\begin{equation}\label{relacoes}
\begin{array}{lll}
\langle \gamma,\gamma\rangle=1\,,&\langle\gamma',\gamma'\rangle=\tilde{B},&
\langle \gamma,\gamma'\rangle=0,\\
\langle\gamma',\gamma''\rangle=0\,,& \langle \gamma'',\gamma''\rangle=D\,
,& \langle \gamma,\gamma''\rangle=-\tilde{B},\\
\langle \gamma',\gamma'''\rangle=-D \,,&
\langle \gamma'',\gamma'''\rangle=0\,,&  \langle \gamma,\gamma'''\rangle=0,\\
\langle \gamma''',\gamma'''\rangle=E,\,&\qquad &
\end{array}
\end{equation}
where $$\tilde{B}=(1+\tau^{-2})\cos^2\vartheta-1,
\qquad D=a^2+b^2\tilde{B}+2 \,a \,b\, \tau^{-1}\cos\vartheta,$$ $$
E=a\big(a-2b^2\big)\tilde{B}+b^2 D-2a^2b\, \tau^{-1}\cos\vartheta.$$
In addition, as
$$
J_1\gamma={X_1}_{|\gamma}=-\tau\,{E_1}_{|\gamma},
$$
using \eqref{10.2} and \eqref{coorT'},  we obtain the following identities
\begin{equation}\label{relacoesJ}
\begin{aligned}
&\langle J_1\gamma,\gamma'\rangle=-\tau^{-1}\cos\vartheta,\\
&\langle J_1 \gamma,\gamma''\rangle=0\,,\\
&\langle J_1\gamma'',\gamma'\rangle=-a \,\tau^{-1} \,\cos\vartheta-b\,
\tilde{B}:=I,\\
&\langle J_1\gamma',\gamma'''\rangle=0\,,\\
&\langle J_1\gamma',\gamma''\rangle+\langle J_1\gamma,\gamma'''\rangle=0\,,\\
&\langle J_1\gamma'',\gamma'''\rangle+\langle J_1\gamma',\gamma^{IV}\rangle=0\,.
\end{aligned}
\end{equation}
 \end{remark}

To determine the expression of the position vector of $\gamma$ in $\r_2^4$, we
 integrate \eqref{11.1}, dividing the study in three
cases, according to the three possibilities:

\begin{itemize}
\item [(i)] $b^2=4a$;


\item[(ii)] $b^2>4a$;


%
 \item[(iii)] $b^2<4a$.

%
%
%
%

\end{itemize}

\section{The case $b^2=4a$}
\begin{theorem}
 Let $\gamma: I\to \S \subset \r_2^4$ be a proper biharmonic curve parametrized by
 arc length such that $b^2=4a.$
 Then
 \begin{equation}
 \label{bcaso1}
b= -\tau^{-1}(2+\tau^2)\cos\vartheta+\sqrt{(4+5\tau^2)\cos^2\vartheta
  -4(1+\tau^2)},
\end{equation}

with

$$\cos^2\vartheta=\frac{(2+\tau^2)^2}{4+5\tau^2+\tau^4}.$$
Also,
  \begin{equation}
   \label{gammaCaso1.2}
   \begin{aligned}
   \gamma(s)=& A \Big(\cos(\sqrt{a}\,s)+g_{14}\,s\, \sin(\sqrt{a}\,s),
   -\sin(\sqrt{a}\,s)+g_{14}\, s\,\cos(\sqrt{a}\,s),\\
   &- g_{14}\, s\,\cos(\sqrt{a}\,s),
   \,   g_{14}\, s\,\sin(\sqrt{a}\,s)\Big),
 \end{aligned}
 \end{equation}
  where $g_{14}$ is the constant, given by
  $$g_{14}=\frac{\tau}{\sqrt{4+5\tau^2+\tau^4}}$$
  and
  $A \in \mathrm{O}_2(4)$ is a $4\times 4$ indefinite orthogonal matrix which
  commutes with $J_1$.

\end{theorem}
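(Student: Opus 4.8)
The plan is to integrate the linear ODE \eqref{11.1} in the degenerate case $b^2=4a$ and then pin down all the free constants using the algebraic relations collected in the preceding Remark. First I would write the characteristic polynomial of \eqref{11.1}, namely $\lambda^4+(b^2-2a)\lambda^2+a^2=0$, and solve the quadratic in $\lambda^2$. Its discriminant equals $(b^2-2a)^2-4a^2=b^2(b^2-4a)$, which vanishes precisely under the hypothesis $b^2=4a$; hence $\lambda^2=-a$ is a double root, $\lambda=\pm i\sqrt a$ each has multiplicity two, and the general solution takes the form
$$\gamma(s)=\vec P\cos(\sqrt a\,s)+\vec Q\sin(\sqrt a\,s)+\vec R\,s\cos(\sqrt a\,s)+\vec S\,s\sin(\sqrt a\,s)$$
for constant vectors $\vec P,\vec Q,\vec R,\vec S\in\r^4_2$. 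I would note that $a=b^2/4>0$ (since $b\neq 0$), so the trigonometric form is genuinely the right one.

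Next I would determine $\cos^2\vartheta$ and fix the sign in \eqref{bcaso1}. Simplifying \eqref{ab} with $\cos2\vartheta=2\cos^2\vartheta-1$ gives $a=1-(1+\tau^{-2})\cos^2\vartheta-\tau^{-1}\cos\vartheta\,b$, so the case assumption $b^2=4a$ becomes $b^2+4\tau^{-1}\cos\vartheta\,b-4+4(1+\tau^{-2})\cos^2\vartheta=0$. Squaring the biharmonic relation for $b$ in \eqref{ab} produces a second equation in $b$ and $\cos\vartheta$; subtracting the two eliminates $b^2$ and yields the linear relation $\tau\cos\vartheta\,b=2(1+\tau^2)\cos^2\vartheta-2(2+\tau^2)$. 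Substituting this back gives a quadratic in $\cos^2\vartheta$ whose coefficients factor through $(1+\tau^2)(4+\tau^2)=4+5\tau^2+\tau^4$, with roots $\cos^2\vartheta=1$ and $\cos^2\vartheta=(2+\tau^2)^2/(4+5\tau^2+\tau^4)$. The first is discarded because a proper biharmonic curve is not harmonic (it would force $\vartheta=0$, i.e. $T=E_1$), leaving the stated value. Feeding this back into \eqref{ab} shows the radical equals $\tau^3/\sqrt{4+5\tau^2+\tau^4}$ and that only the $+$ sign is compatible with $\tau\cos\vartheta\,b<0$, establishing \eqref{bcaso1}; in particular $\sqrt a=-b/2=2(1+\tau^2)/(\tau\sqrt{4+5\tau^2+\tau^4})$.

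Finally I would fix the constant vectors. The identities \eqref{relacoes} and \eqref{relacoesJ} hold for all $s$, so substituting the general solution and matching the independent functions $\cos,\sin,s\cos,s\sin,s^2\cos,\dots$ forces the mutual inner products of $\vec P,\vec Q,\vec R,\vec S$ (for both $\langle\cdot,\cdot\rangle$ and its $J_1$-twist) to take prescribed values. These are exactly the values realised by the model frame $\vec P=(1,0,0,0)$, $\vec Q=(0,-1,0,0)$, $\vec R=(0,g_{14},-g_{14},0)$, $\vec S=(g_{14},0,0,g_{14})$, which I would verify directly against the normalisations $\langle\gamma,\gamma\rangle=1$ and $\langle\gamma',\gamma'\rangle=\tilde B$; for this frame the second reduces to $\sqrt a(\sqrt a-2g_{14})=\tilde B=4/(\tau^2(4+\tau^2))$, hence $g_{14}=(a-\tilde B)/(2\sqrt a)=\tau/\sqrt{4+5\tau^2+\tau^4}=\sin\vartheta$. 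Since any other admissible system of constant vectors shares the same Gram data and the same $J_1$-relations, it is carried to the model one by a linear map preserving both $\langle\cdot,\cdot\rangle$ and $J_1$, i.e. by some $A\in\mathrm{O}_2(4)$ with $AJ_1=J_1A$, which gives \eqref{gammaCaso1.2}. I expect the main obstacle to be this last step: organising the identities \eqref{relacoes}--\eqref{relacoesJ} so that they genuinely cut the eight inner-product parameters down to the single constant $g_{14}$, and checking that the residual freedom is exactly the $J_1$-commuting isometry subgroup rather than something larger.
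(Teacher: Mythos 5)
Your proposal is correct and follows essentially the same route as the paper: integrate \eqref{11.1} with the double root $\lambda^2=-a$, extract $\cos^2\vartheta=(2+\tau^2)^2/(4+5\tau^2+\tau^4)$ and \eqref{bcaso1} from $b^2=4a$ combined with \eqref{ab}, then pin down the Gram and $J_1$-products of the four constant vectors via \eqref{relacoes} and \eqref{relacoesJ} and conclude with a $J_1$-commuting indefinite isometry --- the only difference being that at the last step the paper explicitly builds the adapted basis $e_3=g_3/\langle g_2,g_3\rangle-g_2$, $e_4=g_4/\langle g_1,g_4\rangle-g_1$ from the null vectors $g_3,g_4$, whereas your Witt-type ``equal Gram data'' argument works once one notes that the Gram matrix of the four vectors has determinant $g_{14}^4\neq 0$, so they genuinely span $\r^4_2$ (and your side computations $\tilde{B}=4/(\tau^2(4+\tau^2))$, $\sqrt{a}=-b/2$, $g_{14}=\sin\vartheta$ all check out). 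One small repair: both sign choices in \eqref{ab} give $\tau\cos\vartheta\, b<0$, so the inequality alone cannot select the $+$ branch; what does select it is the exact value $\tau\cos\vartheta\, b=-4(2+\tau^2)/(4+\tau^2)$ forced by your own linear relation $\tau\cos\vartheta\, b=2(1+\tau^2)\cos^2\vartheta-2(2+\tau^2)$, which only the $+$ branch attains, so your conclusion stands with that one-line fix.
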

\begin{proof}
 As $b^2=4a$, the differential equation \eqref{11.1} turns
 \begin{equation}
  \label{50.1}
  \gamma^{IV}(s)+ 2 a \, \gamma''(s)+a^2 \gamma(s)=0.
 \end{equation}
Integrating \eqref{50.1} we have
\begin{equation}
\label{50.2}
  \gamma(s)= \cos(\sqrt{a}\,s)\, g_1+ \sin(\sqrt{a}\,s)\, g_2
  + s\,\cos(\sqrt{a}\,s)\, g_3  + s\,\sin(\sqrt{a}\,s)\, g_4,
 \end{equation}
where $g_1$, $g_2$, $g_3$ and $g_4$ are constant vectors of $\r^4_2$.\\

A direct
calculation shows that $b^2=4a$ occurs in two cases: for $\vartheta=0$ and
 for $$\cos^2\vartheta=\frac{(2+\tau^2)^2}{4+5\tau^2+\tau^4},$$
and in both cases $b$ must have the expression given in \eqref{bcaso1}. Since
the first case produces harmonic curves, we study only the second one.\\

Using the relations \eqref{relacoes}, we get
\begin{equation}
  \label{relacoes02}
  \begin{aligned}
  &\langle g_1, g_1 \rangle=\langle g_2, g_2 \rangle=1,\\
  &\langle g_3, g_3 \rangle=\langle g_4, g_4 \rangle=0,\\
  &\langle g_1, g_4 \rangle=-\langle g_2, g_3 \rangle=
  \frac{\tau}{\sqrt{4+5\tau^2+\tau^4}},\\
 &\langle g_1, g_2 \rangle=\langle g_1, g_3 \rangle=
 \langle g_2, g_4 \rangle=
\langle g_3, g_4 \rangle=0,
  \end{aligned}
 \end{equation}
whereas \eqref{relacoesJ} yields
\begin{equation}
  \label{relacoesJ02}
  \begin{aligned}
  &\langle J_1 g_1, g_2 \rangle=-1,\\
  &\langle J_1 g_2, g_4 \rangle=\langle J_1 g_1, g_3 \rangle=
  \frac{\tau}{\sqrt{4+5\tau^2+\tau^4}},\\
  &\langle J_1 g_1, g_4 \rangle=
  \langle J_1 g_2, g_3 \rangle=\langle J_1 g_3, g_4 \rangle =0.
  \end{aligned}
 \end{equation}
 Now, putting
$$
  \left\{
  \begin{aligned}
   &e_1=g_1,\\
   &e_2=g_2,\\
   &e_3=\frac{g_3}{\langle g_2,g_3\rangle}-g_2,\\
   &e_4=\frac{g_4}{\langle g_1,g_4\rangle}-g_1,
  \end{aligned}
  \right.
$$
we have that $\{e_i\}$ is an orthonormal basis of $\r^4_2$ that satisfies:
$$
\begin{aligned}
&\langle J_1 e_1,e_2\rangle=\langle J_1e_3,e_4\rangle=-1,\\
&\langle J_1 e_1,e_3\rangle=\langle J_1 e_1,e_4\rangle=\langle J_1 e_2,e_3\rangle=
 \langle J_1 e_2,e_4\rangle=0.
\end{aligned}
$$
We conclude that $e_2=-J_1 e_1$ and $e_4=J_1 e_3$.
So if we consider the orthonormal basis $\{\tilde{E}_i\}_{i=1}^4$ of $\r^4_2$
given by
$$
\tilde{E}_1=(1,0,0,0)\,,\quad \tilde{E}_2=(0,-1,0,0)\,,
\quad \tilde{E}_3=(0,0,1,0)\,,\quad \tilde{E}_4=(0,0,0,1)\,,
$$
there must exists a matrix $A\in \mathrm{O}_2(4)$, with $J_1\,A=A\, J_1$ such that
$e_i=A\, \tilde{E}_i$, $i~\in~\{1,2,3,4\}$. Finally, putting
$\langle g_1, g_4\rangle=g_{14}$,
we can rewrite \eqref{50.2} as
\eqref{gammaCaso1.2}.
\end{proof}

\section{The case $b^2>4a$}

%
%

\begin{theorem}
  Let $\gamma:I \to \S\subset \r^4_2$ be a proper biharmonic curve parametrized
  by arc length,
 such that $b^2>4a$. Then there are two possibilities:
 \begin{itemize}
  \item [(i)] $$b= -\tau^{-1}(2+\tau^2)\cos\vartheta+\sqrt{(4+5\tau^2)
  \cos^2\vartheta
  -4(1+\tau^2)}$$
  and $$  \frac{4(1+\tau^2)}{(4+5\tau^2)}\leq \cos^2\vartheta
  <\frac{(2+\tau^2)^2}{4+5\tau^2+\tau^4};$$

  \item[(ii)]
 $$b= -\tau^{-1}(2+\tau^2)\cos\vartheta-\sqrt{(4+5\tau^2)\cos^2\vartheta
  -4(1+\tau^2)}$$
  and
  $$ \frac{4(1+\tau^2)}{(4+5\tau^2)} \leq \cos^2\vartheta.$$
 \end{itemize}
In both cases, the expression of $\gamma$ as a curve in $\r^4_2$ is
\begin{equation}
\label{gammaCaso2.2}
 \gamma(s)=A\big(\sqrt{C_{33}}\cos(\alpha_2\,s)\,,\,
 \sqrt{C_{33}}\sin(\alpha_2\,s) \,,\,
\sqrt{-C_{11}}\cos(\alpha_1\,s)\,,\,\sqrt{-C_{11}}\sin(\alpha_1\,s)\big)\,,
\end{equation}
where
$$\alpha_{1,2}=\sqrt{\frac{(b^2-2a)\pm \sqrt{b^2(b^2-4a)}}{2}} $$
and
$$C_{11}=\frac{\tilde{B}-\alpha^2_2}{\alpha^2_1-\alpha^2_2}, \qquad
 C_{33}= \frac{-\tilde{B}+\alpha^2_1}{\alpha^2_1-\alpha^2_2},$$
are real constants and $A\in \mathrm{O}_2(4)$ is a $4\times 4$ indefinite
orthogonal matrix anticommuting with~$J_1$.
\end{theorem}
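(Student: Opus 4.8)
The plan is to integrate the linear constant-coefficient ODE \eqref{11.1} explicitly in the regime $b^2>4a$ and then pin down the free constant vectors using the scalar identities \eqref{relacoes} and \eqref{relacoesJ}. The characteristic polynomial is $\lambda^4+(b^2-2a)\lambda^2+a^2$, a quadratic in $\lambda^2$ of discriminant $(b^2-2a)^2-4a^2=b^2(b^2-4a)$, which is positive precisely because $b^2>4a$. Using $(b^2-2a)^2-b^2(b^2-4a)=4a^2>0$ together with $b^2>2a$, one checks that both roots for $\lambda^2$ are negative, so the four characteristic roots are purely imaginary, $\pm i\alpha_1,\pm i\alpha_2$, with $\alpha_{1,2}$ as in the statement; hence the general solution is $\gamma(s)=\cos(\alpha_1 s)\,c_1+\sin(\alpha_1 s)\,c_2+\cos(\alpha_2 s)\,c_3+\sin(\alpha_2 s)\,c_4$ for constant vectors $c_i\in\r^4_2$. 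I would obtain the split into cases (i) and (ii), and the stated ranges of $\cos^2\vartheta$, by inserting the two branches of $b=\beta'$ from \eqref{ab} and comparing $b^2$ with $4a$ as functions of $\cos^2\vartheta$: the common lower bound $4(1+\tau^2)/(4+5\tau^2)$ is where the square root defining $b$ first becomes real, while the upper bound $(2+\tau^2)^2/(4+5\tau^2+\tau^4)$ in case (i) is exactly the value at which $b^2=4a$, already identified in the previous section.

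Next I would impose \eqref{relacoes}. Substituting the general solution into $\langle\gamma,\gamma\rangle\equiv1$, $\langle\gamma,\gamma''\rangle\equiv-\tilde B$ and $\langle\gamma',\gamma'\rangle\equiv\tilde B$, and matching the coefficients of the linearly independent functions $1,\cos(2\alpha_i s),\sin(2\alpha_i s),\cos((\alpha_1\pm\alpha_2)s),\sin((\alpha_1\pm\alpha_2)s)$ (independent because $\alpha_1\neq\alpha_2$ and $\alpha_1,\alpha_2\neq0$), forces $\{c_1,c_2,c_3,c_4\}$ to be mutually orthogonal with $\langle c_1,c_1\rangle=\langle c_2,c_2\rangle$ and $\langle c_3,c_3\rangle=\langle c_4,c_4\rangle$; the surviving equations $\langle c_1,c_1\rangle+\langle c_3,c_3\rangle=1$ and $\alpha_1^2\langle c_1,c_1\rangle+\alpha_2^2\langle c_3,c_3\rangle=\tilde B$ then give $\langle c_1,c_1\rangle=C_{11}$ and $\langle c_3,c_3\rangle=C_{33}$ with the stated values (and the remaining identities of \eqref{relacoes} are checked to be automatically consistent). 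Since $c_1,\dots,c_4$ are four mutually orthogonal nonzero vectors they form a basis of $\r^4_2$; as the signature is $(2,2)$ while the norms come in the equal pairs $(C_{11},C_{11})$ and $(C_{33},C_{33})$ with $C_{11}+C_{33}=1>0$, exactly one pair must be negative, which forces $C_{11}<0<C_{33}$ and makes $\sqrt{-C_{11}}$ and $\sqrt{C_{33}}$ real.

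The decisive and most delicate step is to read the action of the complex structure off \eqref{relacoesJ}; the sign bookkeeping here is the real obstacle, since it is exactly what turns the commuting conclusion of the case $b^2=4a$ into the anticommuting one claimed here. Using that $J_1$ is a skew-adjoint isometry of $\r^4_2$ with $J_1^2=-\mathrm{Id}$, I would first substitute the solution into $\langle J_1\gamma,\gamma''\rangle\equiv0$ and match the $\cos((\alpha_1\pm\alpha_2)s),\sin((\alpha_1\pm\alpha_2)s)$ coefficients; this annihilates all four mixed products $\langle J_1 c_i,c_j\rangle$ with $i\in\{1,2\}$ and $j\in\{3,4\}$, so $J_1$ leaves each of the planes $\mathrm{span}(c_1,c_2)$ and $\mathrm{span}(c_3,c_4)$ invariant. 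As $\langle J_1 c_i,c_i\rangle=0$ and the two norms inside each plane coincide, $J_1^2=-\mathrm{Id}$ forces $J_1 c_1=\kappa_1 c_2$ and $J_1 c_3=\kappa_2 c_4$ with $\kappa_1,\kappa_2\in\{\pm1\}$; the constant-term identities $\langle J_1\gamma,\gamma'\rangle\equiv-\tau^{-1}\cos\vartheta$ and $\langle J_1\gamma'',\gamma'\rangle\equiv I$ then reduce to the nonsingular system $\alpha_1\kappa_1 C_{11}+\alpha_2\kappa_2 C_{33}=-\tau^{-1}\cos\vartheta$ and $\alpha_1^3\kappa_1 C_{11}+\alpha_2^3\kappa_2 C_{33}=-I$, which determines both signs unambiguously, the point to verify (by inserting the explicit values of $\alpha_{1,2},C_{11},C_{33},\tilde B,I$) being that they come out so as to give anticommutation. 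Finally, normalizing to the orthonormal basis $e_1=c_1/\sqrt{-C_{11}}$, $e_2=c_2/\sqrt{-C_{11}}$ (timelike) and $f_1=c_3/\sqrt{C_{33}}$, $f_2=c_4/\sqrt{C_{33}}$ (spacelike), the coordinate identity $\gamma=A\gamma_0$, with $\gamma_0$ the profile in \eqref{gammaCaso2.2} and $\tilde E_2=(0,-1,0,0)$, forces $A\tilde E_1=f_1$, $A\tilde E_2=-f_2$, $A\tilde E_3=e_1$, $A\tilde E_4=e_2$; this $A$ lies in $\mathrm{O}_2(4)$ because it carries an orthonormal basis to one of the same causal type, and the signs $\kappa_1,\kappa_2$ found above translate precisely into $J_1 A=-A J_1$, establishing \eqref{gammaCaso2.2}.
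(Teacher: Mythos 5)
Your proposal follows the paper's proof architecture exactly: integrate \eqref{11.1} into the two-frequency trigonometric form, determine the Gram matrix of the constant vectors from \eqref{relacoes}, use \eqref{relacoesJ} to show that $J_1$ preserves the planes $\mathrm{span}(c_1,c_2)$ and $\mathrm{span}(c_3,c_4)$ and acts on each by $\pm$ a quarter turn, then normalize and encode the change of basis in a matrix $A\in\mathrm{O}_2(4)$. You differ only in mechanics: you match trigonometric coefficients of the identities for all $s$, where the paper evaluates the ten relations \eqref{relacoes} at $s=0$; and you fix the signs $\kappa_1,\kappa_2$ via the constant terms of $\langle J_1\gamma,\gamma'\rangle$ and $\langle J_1\gamma'',\gamma'\rangle$, where the paper reads them off from \eqref{coorT'} at $s=0$. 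Both variants are workable in principle, but three steps as written have genuine holes.

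First, your independence claim for $1,\cos(2\alpha_i s),\sin(2\alpha_i s),\cos((\alpha_1\pm\alpha_2)s),\sin((\alpha_1\pm\alpha_2)s)$ is justified only by $\alpha_1\neq\alpha_2$ and $\alpha_i\neq 0$, which misses the resonance $\alpha_1=3\alpha_2$ (equivalently $b^2=16a/3$, which lies strictly inside the regime $b^2>4a$ when $a>0$ and is not excluded by your case analysis): there $\alpha_1-\alpha_2=2\alpha_2$, so the matching mixes $C_{33}-C_{44}$ and $C_{34}$ with the cross products, and mutual orthogonality no longer follows from the three identities you invoke without additional work. The paper's $s=0$ evaluation of all ten relations sidesteps this entirely, since its linear system uses only $\alpha_1^2-\alpha_2^2=\sqrt{b^2(b^2-4a)}\neq 0$. (Your $J_1$-plane-invariance step is immune to this, because $\langle J_1\gamma,\gamma''\rangle\equiv 0$ reduces to $(\alpha_1^2-\alpha_2^2)\langle J_1 P,Q\rangle\equiv 0$ with only the cross frequencies appearing.) Second, the signature argument shows that exactly one of the two equal-norm pairs is timelike, but $C_{11}+C_{33}=1>0$ does not decide \emph{which}: $C_{11}=2$, $C_{33}=-1$ also sums to $1$. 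Since \eqref{gammaCaso2.2} places the frequency $\alpha_1$ in the timelike slots, you must actually verify $\tilde{B}<\alpha_2^2$ on the admissible range of $\cos^2\vartheta$, which is what the paper's explicit observation that $C_{11}=C_{22}<0$ and $C_{33}=C_{44}>0$ amounts to. Third, and you flag it yourself as the decisive point: your $2\times 2$ system does determine $\kappa_1,\kappa_2$ unambiguously, but you never carry out the computation showing the solution is $\kappa_1=\kappa_2=-1$; without it the anticommutation $J_1A=-AJ_1$ — precisely what distinguishes this case from the commuting cases $b^2=4a$ and $b^2<4a$ — remains unproven. The paper's device is shorter and worth adopting: \eqref{coorT'} says $\gamma''=a\gamma+bJ_1\gamma'$, which at $s=0$ yields $\kappa_i=(a+\alpha_i^2)/(b\,\alpha_i)$ directly, and since the characteristic equation factors as $(\alpha^2+a)^2=b^2\alpha^2$, each $\kappa_i$ is automatically $\pm 1$, leaving only a sign check.
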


\begin{proof}
First, observe that the condition $b^2>4a$ gives the two possibilities (i) and (ii).
Also a direct integration of \eqref{11.1}, gives the solution
$$
\gamma(s)=\cos(\alpha_1\,s)\, C_1+\sin(\alpha_1\,s)\, C_2+
 \cos(\alpha_2\,s)\, C_3+\sin(\alpha_2\,s)\, C_4,$$
where
$$
\alpha_{1,2}=\sqrt{\frac{(b^2-2a)\pm \sqrt{b^2(b^2-4a)}}{2}}$$
are real constants, while the $C_i$, $i\in\{1,2,3,4\}$,  are constants vectors in
$\r^4_2$.
\\

Putting $C_{ij}=\langle C_i,C_j\rangle$, and evaluating  the relations
\eqref{relacoes}  in $s=0$, we obtain:
\begin{equation}\label{um}
    C_{11}+C_{33}+2C_{13}=1,
\end{equation}
\begin{equation}\label{dois}
    \alpha_1^2\,C_{22}+\alpha_2^2\,C_{44}+2\alpha_1\alpha_2\,C_{24}=\tilde{B},
\end{equation}
\begin{equation}\label{tres}
    \alpha_1\,C_{12}+\alpha_2\,C_{14}+\alpha_1\,C_{23}+\alpha_2\,C_{34}=0,
\end{equation}
\begin{equation}\label{quatro}
    \alpha_1^3\,C_{12}+\alpha_1\alpha_2^2\,C_{23}+\alpha_1^2\alpha_2\,C_{14}
    +\alpha_2^3C_{34}=0,
\end{equation}
\begin{equation}\label{cinco}
    \alpha_1^4\,C_{11}+\alpha_2^4\,C_{33}+2\alpha_1^2\alpha_2^2\,C_{13}=D,
\end{equation}
\begin{equation}\label{seis}
    \alpha_1^2\,C_{11}+\alpha_2^2\,C_{33}+(\alpha_1^2+\alpha_2^2)\,C_{13}=
    \tilde{B},
\end{equation}
\begin{equation}\label{sete}
    \alpha_1^4\,C_{22}+(\alpha_1^3\alpha_2\,+\alpha_1\alpha_2^3)\,C_{24}+
    \alpha_2^4\,C_{44}=D,
\end{equation}
\begin{equation}\label{oito}
    \alpha_1^5\,C_{12}+\alpha_1^3\alpha_2^2\,C_{23}+\alpha_1^2\alpha_2^3\,C_{14}
    +\alpha_2^5\,C_{34}=0,
\end{equation}
\begin{equation}\label{nove}
    \alpha_1^3\,C_{12}+\alpha_1^3\,C_{23}+\alpha_2^3\,C_{14}+\alpha_2^3\,C_{34}=0,
\end{equation}
\begin{equation}\label{dez}
    \alpha_1^6\,C_{22}+\alpha_2^6\,C_{44}+2\alpha_1^3\alpha_2^3\,C_{24}=E.
\end{equation}

From \eqref{tres}, \eqref{quatro}, \eqref{oito}, \eqref{nove}, it follows that
$$C_{12}=C_{14}=C_{23}=C_{34}=0.$$
Also, from  \eqref{um}, \eqref{cinco} and \eqref{seis}, we obtain
$$C_{11}=\frac{\tilde{B}-\alpha^2_2}{\alpha^2_1-\alpha^2_2},
\qquad C_{13}=0,\qquad C_{33}=\frac{-\tilde{B}+\alpha^2_1}{\alpha^2_1-\alpha^2_2}.$$
Finally, using  \eqref{dois}, \eqref{sete} and \eqref{dez}, we obtain
$$C_{22}=\frac{D-\tilde{B}\alpha^2_2}{\alpha^2_1(\alpha^2_1-\alpha^2_2)},
\quad C_{24}=0,\qquad
C_{44}=\frac{-D+\tilde{B}\alpha^2_1}{\alpha^2_2(\alpha^2_1-\alpha^2_2)}.$$
We observe that as $$\frac{4(1+\tau^2)}{(4+5\tau^2)} \leq \cos^2\vartheta,$$
then
$$C_{11}=C_{22}<0, \qquad
C_{33}=C_{44}>0.$$

Since $\{C_i\}_{i=1}^4$ are mutually orthogonal  and
$$
||C_1||=||C_2||=\sqrt{-C_{11}},
\qquad
||C_3||=||C_4||=\sqrt{C_{33}},
$$
we obtain a pseudo-orthonormal
 basis of $\r_2^4$
 putting $e_i=C_i/||C_i||$, $i\in\{1,2,3,4\}$,  and we can write:
\begin{eqnarray}\label{30.2}
\gamma(s)=\sqrt{-C_{11}}\, \big(\cos(\alpha_1\,s) e_1+\sin(\alpha_1\,s) e_2\big)+
\sqrt{C_{33}}\,\big( \cos(\alpha_2\,s) e_3+\sin(\alpha_2\,s) e_4\big).
\end{eqnarray}

Now, evaluating in  $s=0$ the identities \eqref{relacoesJ}, we have:
\begin{equation}\label{eq1bis}\begin{aligned}
    &\alpha_2\,C_{33}\langle J_1e_3,e_4\rangle-
    \alpha_1 C_{11}\langle J_1e_1,e_2\rangle\\&
    +\sqrt{-C_{11}C_{33}}\,
    (\alpha_1\langle J_1e_3,e_2\rangle+\alpha_2\langle J_1e_1,e_4\rangle)
    =-\tau^{-1}\cos\vartheta,
    \end{aligned}
\end{equation}
$$
     \langle J_1e_1,e_3\rangle=0\,,
$$
\begin{equation}\label{39}\begin{aligned}
&\alpha_2^3\,C_{33}\langle J_1e_3,e_4\rangle-\alpha_1^3\,
C_{11}\langle J_1e_1,e_2\rangle\\&+
\sqrt{-C_{11}C_{33}}\,(\alpha_1\alpha_2^2\langle J_1e_3,e_2\rangle+
\alpha_1^2\alpha_2\langle J_1e_1,e_4\rangle)=-I,
\end{aligned}
\end{equation}
$$
    \langle J_1e_2,e_4\rangle=0\,,
$$
\begin{equation}\label{eq2bis}
    \alpha_1\langle J_1e_2,e_3\rangle+\alpha_2\langle J_1e_1,e_4\rangle=0\,,
\end{equation}
\begin{equation}\label{eq3bis}
    \alpha_2\langle J_1 e_2,e_3\rangle+\alpha_1\langle J_1e_1,e_4\rangle=0\,.
\end{equation}
We point out that  to obtain the previous identities we have divided by
$\alpha_1^2-\alpha_2^2=\sqrt{b^2(b^2-4a)}$ which is different from zero.
From \eqref{eq2bis} and \eqref{eq3bis}, taking into account the
$\alpha_1^2-\alpha_2^2\neq 0$, it results that
\begin{equation}\label{eq4bis}
     \langle J_1e_3,e_2\rangle=0\,,\qquad \langle J_1e_1,e_4\rangle=0\,.
\end{equation}
Then, $J_1e_1=\pm e_2$ and $J_1e_3=\pm e_4$. So, the position vector
of $\gamma$ is given by
\begin{equation}\label{33.2}
\gamma(s)=\sqrt{-C_{11}}\, \big(\cos(\alpha_1\,s) e_1\pm
\sin(\alpha_1\,s) J_1 e_1\big)+
\sqrt{C_{33}}\,\big( \cos(\alpha_2\,s) e_3\pm \sin(\alpha_2\,s)J_1 e_3\big).
\end{equation}

If we use \eqref{coorT'} for $s=0$, we get $J_1e_1=-e_2$ and
$J_1e_3=-e_4$.

Then, if we fix the orthonormal basis  of $\r^4_2$ given by
$$
\bar{E}_1=(0,0,1,0)\,,\quad \bar{E}_2=(0,0,0,1)\,,\quad
\bar{E}_3=(1,0,0,0)\,,
\quad \bar{E}_4=(0,1,0,0)\,,
$$
there must exists a matrix $A\in \mathrm{O}_2(4)$, with $J_1\,A=-A\,J_1$,
such that $e_i=A\,\bar{E}_i$.
\end{proof}

\section{The case $b^2<4a$}

%
%
%

\begin{theorem}
 Let $\gamma:I \to \S\subset \r^4_2$ be a proper biharmonic curve parametrized by
 arc length,
 such that $b^2<4a$. Then
 \begin{equation}\label{bbb}
 b= -\tau^{-1}(2+\tau^2)\cos\vartheta+\sqrt{(4+5\tau^2)
  \cos^2\vartheta
  -4(1+\tau^2)},
  \end{equation}
  \begin{equation}\label{tt}
  \frac{(2+\tau^2)^2}{4+5\tau^2+\tau^4}<\cos^2\vartheta<1,
  \end{equation}
and the expression of $\gamma$ as a curve in $\r^4_2$ is
\begin{equation}\label{gammaCaso3}
\begin{aligned}
\gamma(s)=&A\Big(\cos\big(\frac{b}{2}\, s\big)\cosh (\mu\, s)
+w_{14}\sin\big(\frac{b}{2}\, s\big)\sinh (\mu\,s\big)\, , \, \\
&\sin\big(\frac{b}{2}\, s\big)\cosh (\mu\, s)
-w_{14}\cos\big(\frac{b}{2}\, s\big)\sinh (\mu\,s\big)\, , \, \\
&\cos\big(\frac{b}{2}\, s\big)\sinh (\mu\, s)\sqrt{1+w_{14}^2}\, , \,
\sin\big(\frac{b}{2}\, s\big)\sinh (\mu\, s)\sqrt{1+w_{14}^2}\Big),
\end{aligned}
\end{equation}
where
$$\mu=\frac{\sqrt{4a-b^2}}{2}, \qquad w_{14}=
\frac{b \tau+2\cos\vartheta}{2\tau \mu}$$
are real constants and $A\in \mathrm{O}_2(4)$ is a $4\times 4$
indefinite orthogonal matrix commuting with $J_1$.
\end{theorem}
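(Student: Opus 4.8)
The plan is to follow verbatim the strategy of the two preceding theorems, the only genuinely new feature being the nature of the roots of the characteristic polynomial. First I would settle the preliminary claims \eqref{bbb} and \eqref{tt}. Substituting the expressions \eqref{ab} for $a$ and $b$, the inequality $b^2<4a$ becomes an inequality in the single variable $\cos^2\vartheta$; comparing it against the borderline value $\cos^2\vartheta=\frac{(2+\tau^2)^2}{4+5\tau^2+\tau^4}$ found in the case $b^2=4a$ shows that $b^2<4a$ forces the plus sign in \eqref{ab} (hence \eqref{bbb}) and confines $\cos^2\vartheta$ to the range \eqref{tt}. This is the same algebraic comparison already used to separate cases (i) and (ii).

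Next I would integrate \eqref{11.1}. Its characteristic equation $\lambda^4+(b^2-2a)\lambda^2+a^2=0$ gives $\lambda^2=\frac{(2a-b^2)\pm\sqrt{b^2(b^2-4a)}}{2}$, and since now $b^2<4a$ the radicand $b^2(b^2-4a)$ is negative, so the two values of $\lambda^2$ are genuine complex conjugates. A direct computation shows the four roots are $\pm\mu\pm i\,\frac{b}{2}$ with $\mu=\frac{\sqrt{4a-b^2}}{2}$: with $p=\mu$, $q=\frac{b}{2}$ one checks $p^2-q^2=\frac{2a-b^2}{2}$ and $2pq=\frac{b\sqrt{4a-b^2}}{2}$, which match the real and imaginary parts of $\lambda^2$. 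Hence the general solution is
$$\gamma(s)=\cos\!\big(\tfrac{b}{2}s\big)\big(\cosh(\mu s)\,W_1+\sinh(\mu s)\,W_3\big)+\sin\!\big(\tfrac{b}{2}s\big)\big(\cosh(\mu s)\,W_2+\sinh(\mu s)\,W_4\big),$$
with constant vectors $W_1,\dots,W_4\in\r^4_2$ to be determined.

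I would then pin down the $W_i$ exactly as the $C_i$ and the $g_i$ were determined before, by imposing the first integrals \eqref{relacoes} and the $J_1$-identities \eqref{relacoesJ}. Since these relations hold for all $s$ and the functions built from products of $\{\cos(\frac{b}{2}s),\sin(\frac{b}{2}s)\}$ with $\{\cosh(\mu s),\sinh(\mu s)\}$ (equivalently the family $1,\cos(bs),\sin(bs),\cosh(2\mu s),\sinh(2\mu s)$ and their pairwise products) are linearly independent, matching coefficients — or, as in Theorem in case (ii), evaluating the identities and their derivatives at $s=0$ — yields a linear system for the Gram entries $\langle W_i,W_j\rangle$ and the pairings $\langle J_1W_i,W_j\rangle$. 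Note that at $s=0$ one has $\gamma(0)=W_1$, $\gamma'(0)=\frac{b}{2}W_2+\mu W_3$, $\gamma''(0)=(\mu^2-\frac{b^2}{4})W_1+b\mu W_4$, and so on, so the even and odd orders decouple and the system is solvable; in particular $\langle\gamma,\gamma''\rangle(0)=-\tilde{B}$ produces the off-diagonal quantity that, after normalisation, is the stated constant $w_{14}=\frac{b\tau+2\cos\vartheta}{2\tau\mu}$.

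Finally, from the resulting Gram matrix I would read off the signature and rescale the $W_i$ to a (pseudo-)orthonormal frame of $\r^4_2$; the $J_1$-pairings then show, in contrast with the case $b^2>4a$, that the frame is of the type carried by a matrix $A\in\mathrm{O}_2(4)$ with $J_1A=AJ_1$, sending the standard basis to this frame, after which substitution gives the closed form \eqref{gammaCaso3}. The hard part will be the bookkeeping in the third step: because the solution mixes trigonometric and hyperbolic factors, there are substantially more cross terms than in the purely trigonometric case (ii), so one must organise the overdetermined coefficient system carefully, verify its consistency, and confirm both that the signature is $(2,2)$ and that the $J_1$-structure is of the commuting (rather than anticommuting) kind — this last point being exactly what fixes the sign in $w_{14}$ and the commutation relation defining $A$.
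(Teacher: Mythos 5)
Your proposal is correct and follows essentially the same route as the paper: determine \eqref{bbb} and \eqref{tt} from $b^2<4a$, integrate \eqref{11.1} into the mixed trigonometric--hyperbolic form, evaluate \eqref{relacoes} and \eqref{relacoesJ} at $s=0$ to solve for the Gram entries $w_{ij}$ and the pairings $\langle J_1 w_i,w_j\rangle$, orthonormalize to get $e_1=w_1$, $e_2=w_2$, $e_3=(w_3+w_{14}w_2)/\sqrt{1+w_{14}^2}$, $e_4=(w_4-w_{14}w_1)/\sqrt{1+w_{14}^2}$ with $J_1e_1=e_2$, $J_1e_3=e_4$, and read off $A$ commuting with $J_1$. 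One minor correction: the value and sign of $w_{14}$ come from the Gram relation $\langle\gamma,\gamma''\rangle(0)=-\tilde{B}$ together with $w_{11}=1$ (the paper's equations \eqref{uno2} and \eqref{sei2}), not from the commuting $J_1$-structure, which only fixes the signs $J_1e_1=e_2$, $J_1e_3=e_4$ and hence the form of $A$.
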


\begin{proof}
From $b^2<4a$, it results that $b$ ig given by \eqref{bbb} and $\theta$ satisfies \eqref{tt}.
Also a direct integration of \eqref{11.1}, gives
\begin{equation*}
\gamma(s)=\cos\big(\frac{b}{2}\, s\big)\,\big(\cosh (\mu\, s)\,w_1
+\sinh (\mu\,s\big)\,w_3\big)+
\sin\big(\frac{b}{2}\, s\big)\,\big(\cosh (\mu\, s)\,w_2
+\sinh (\mu\, s)\,w_4\big),
\end{equation*}
where
$$\mu=\frac{\sqrt{4a-b^2}}{2},$$ while the $w_i$,
$i\in\{1,2,3,4\}$,  are constant vectors in $\r^4_2$.
If $w_{ij}:=\langle w_i,w_j\rangle$, evaluating the relations \eqref{relacoes}
in $s=0$, we obtain
\begin{equation}\label{uno2}
    w_{11}=1,
\end{equation}
\begin{equation}\label{due2}
    \frac{b^2}{4}\,w_{22}+\mu^2\,w_{33}+\mu\,b\,w_{23}=\tilde{B},
\end{equation}
\begin{equation}\label{tre2}
    \frac{b}{2}\,w_{12}+\mu \,w_{13}=0,
\end{equation}
\begin{equation}\label{quatro2}
     \frac{b}{2}\,\Big(\mu^2-\frac{b^2}{4}\Big)\,w_{12}+\mu^2 \,b\,w_{34}
     +\mu\,\frac{b^2}{2}\,w_{24}
     +\mu\,\Big(\mu^2-\frac{b^2}{4}\Big)\,w_{13}=0,
\end{equation}
\begin{equation}\label{cinque2}
   \Big(\mu^2-\frac{b^2}{4}\Big)^2\,w_{11}+\mu^2 \,b^2\,w_{44}
     +2\mu\,b\,\Big(\mu^2-\frac{b^2}{4}\Big)\,w_{14}=D,
\end{equation}
\begin{equation}\label{sei2}
   \Big(\mu^2-\frac{b^2}{4}\Big)\,w_{11}+\mu \,b\,w_{14}=-\tilde{B},
\end{equation}
\begin{equation}\label{sette2}
    \frac{b^2}{4}\, \Big(3\mu^2-\frac{b^2}{4}\Big)\,w_{22}+\mu^2 \,
    \Big(\mu^2-3\frac{b^2}{4}\Big)\,w_{33}
     +\mu\,\frac{b}{2}\,(4\mu^2-b^2)\,w_{23}=-D,
\end{equation}
\begin{equation}\label{otto2}\begin{aligned}
   &\frac{b}{2}\, \Big(3\mu^2-\frac{b^2}{4}\Big)\,
   \Big(\mu^2-\frac{b^2}{4}\Big)\,w_{12}+b\,\mu^2 \,
   \Big(\mu^2-3\frac{b^2}{4}\Big)\,w_{34}\\&
   +\mu\,\Big(\mu^2-3\frac{b^2}{4}\Big)\,\Big(\mu^2-\frac{b^2}{4}\Big)w_{13}
   +\mu\, \frac{b^2}{2}\, \Big(3\mu^2-\frac{b^2}{4}\Big)\,w_{24}=0,
     \end{aligned}
\end{equation}
\begin{equation}\label{nove2}
   \frac{b}{2}\,\Big(3\mu^2-\frac{b^2}{4}\Big)\,w_{12}+\mu\,
   \Big(\mu^2-3\frac{b^2}{4}\Big)\,w_{13}
  =0,
\end{equation}
\begin{equation}\label{dieci2}\begin{aligned}
   &\frac{b^2}{4}\, \Big(3\mu^2-\frac{b^2}{4}\Big)^2\,w_{22}+
   \mu^2 \,\Big(\mu^2-3\frac{b^2}{4}\Big)^2\,w_{33}\\&
     +\mu\,b\,\Big(3\mu^2-\frac{b^2}{4}\Big)\,
     \Big(\mu^2-3\frac{b^2}{4}\Big)\,w_{23}=E.
     \end{aligned}
\end{equation}
From  \eqref{uno2}, \eqref{cinque2} and \eqref{sei2}, it follows that
$$w_{11}=-w_{44}=1, \qquad w_{14}=\frac{b\tau+2\cos\vartheta}{2\tau\mu}.$$
Also, from \eqref{tre2} and \eqref{nove2}, we obtain
$$w_{12}=w_{13}=0$$ and, therefore, from \eqref{quatro2} and \eqref{otto2},
$$w_{24}=w_{34}=0.$$
Moreover, using \eqref{due2}, \eqref{sette2} and \eqref{dieci2}, we get
$$w_{22}=-w_{33}=1, \qquad w_{23}=-\frac{b\tau+2\cos\vartheta}{2\tau\mu}.$$
Then we can define the following pseudo-orthonormal basis in
$\r^4_2$:
$$\left\{\begin{aligned}
e_1&=w_1,\\
e_2&=w_2,\\
e_3&=\frac{w_3+w_{14}\,w_2}{\sqrt{1+w_{14}^2}},\\
e_4&=\frac{w_4-w_{14}\,w_1}{\sqrt{1+w_{14}^2}},
\end{aligned}
\right.
$$
with $\langle e_1,e_1\rangle=1=\langle e_2,e_2\rangle$ and
$\langle e_3,e_3\rangle=-1=\langle e_4,e_4\rangle$.

Evaluating the identities \eqref{relacoesJ} in  $s=0$, and taking into account
that:
\begin{equation}
\begin{aligned}\nonumber
&\gamma(0)=w_1\,,\\
&\gamma'(0)=\frac{b}{2}\,w_2+\mu\,w_3\,,\\
&\gamma''(0)=\Big(\mu^2-\frac{b^2}{4}\Big)\,w_1+\mu\, b\,w_4\,,\\
&\gamma'''(0)=\frac{b}{2}\,\Big(3\mu^2 -\frac{b^2}{4})\,w_2
+\mu\,\Big(\mu^2-\frac{3}{4} b^2\Big)\,w_3\,,\\
&\gamma^{IV}(0)=\Big(\mu^4-\frac{3}{2}\mu^2\,b^2
+\frac{b^4}{16}\Big)\,w_1+
2\mu\,b\,\Big(\mu^2-\frac{b^2}{4}\Big)\,w_4\,,
\end{aligned}
\end{equation}
we conclude that
\begin{equation}
\begin{aligned}\nonumber
\langle J_1w_1,w_2\rangle&=-\langle J_1w_3,w_4\rangle=1,\\
\langle J_1w_3,w_2\rangle&=\langle J_1w_1,w_4\rangle=0,\\
\langle J_1w_1,w_3\rangle&=\langle J_1w_2,w_4\rangle=-w_{14}.
\end{aligned}
\end{equation}
Then,
$$\langle J_1e_1,e_2\rangle=-\langle J_1e_3,e_4\rangle=1,$$
$$\langle J_1e_1,e_4\rangle=\langle J_1e_1,e_3\rangle=
\langle J_1e_2,e_3\rangle=\langle J_1e_2,e_4\rangle=0.$$
Therefore, we obtain that
$$J_1e_1=e_2,\qquad J_1e_3=e_4.$$
Consequently, if we consider the orthonormal basis
$\{E_i\}_{i=1}^4$ of $\r^4_2$ given by
$$
E_1=(1,0,0,0)\,,\quad E_2=(0,1,0,0)\,,\quad
E_3=(0,0,1,0)\,,\quad E_4=(0,0,0,1)\,,
$$
there must exists $A\in \mathrm{O}_2(4)$, with $J_1A=AJ_1$, such that
$e_i=A\,E_i$, $i\in\{1,2,3,4\}$.
\end{proof}

\end{document}